\subjclass[2010]{Primary: 37D20; Secondary: 37C70}
\keywords{Surface diffeomorphism, Homoclinic class, Axiom A}
\thanks{Partially supported by CNPq, FAPERJ and PRONEX/DS from Brazil.}
\newtheorem{theorem}{Theorem}[section] 
\newtheorem{lemma}[theorem]{Lemma}     
\newtheorem{corollary}[theorem]{Corollary}
\newtheorem{Prop}[theorem]{Proposition}
\newtheorem{defi}[theorem]{Definition}
\newtheorem{example}[theorem]{Example}
\newtheorem{thm}{Theorem}
\newcommand{\T}{\mathbb{T}}
\newcommand{\de} {\delta}       \newcommand{\De}{\Delta}
\newcommand{\vep}{\varepsilon}
\newcommand{\la} {\lambda}
\newcommand{\si} {\sigma}
\newcommand{\om} {\omega}       \newcommand{\Om}{\Omega}
\newcommand{\Z}{\mathbb{Z}}
\newcommand{\N}{\mathbb{N}}
\newcommand{\R}{\mathbb{R}}
\newcommand{\SK}{{\mathcal K}}
\title[On the Space of Iterated Function Systems and Their Topological Stability]
 {On the Space of Iterated Function Systems and Their Topological Stability} 
\author{A. Arbieto, A. Trilles}
\address{Instituto de Matem\'atica, Universidade Federal do Rio de Janeiro, P. O. Box 68530, 21945-970 Rio
de Janeiro, Brazil.}
\email{arbieto@im.ufrj.br, trilles@matematica.ufrj.br}
\begin{document}
\maketitle

\begin{abstract}
	We study iterated function systems (IFS) with compact parameter space. We show that the space of IFS with phase space $X$ is the hyperspace of the space of self continuous maps of $X$. With this result we obtain that the Hausdorff distance is a natural metric for this space which we use to define topological stability.

Then we prove, in the context of IFS, the classical results showing that shadowing property is a necessary condition for topological stability and shadowing property added to expansiveness are a sufficient condition for topological stability. To prove these statements, in fact, we use a stronger type of shadowing, called concordant shadowing property.

	We also give an example showing that concordant shadowing property is truly different than the traditional definition of shadowing property for IFS.

\end{abstract}


\section{Introduction} 
\label{intro}
In 1981, Hutchinson~\cite{Hutchinson} introduced the Iterated Function Systems (IFS) as a way of studying fractals. In that case he studied only hyperbolic IFS with finite parameter space, a finite collection of contractions. His theory and fractal theory was disseminated by different books, as~\cite{Barnsley, FALCONER}.

He realized that the study of the omega-limit set of a collection of maps is connected with the iteration of compact sets. It is like a collective dynamics and he found that the base space was already been study by topologists, the so called hyperspace of a compact metric space, using the Hausdorff metric.

Eventually, it was realized that the theory of IFS can be seen as the action of a atomic measure on the space of dynamics over the phase space, generating a random dynamical system. So, a natural question arises: Instead of an atomic measure (related with finitely maps) could be used a Radon measure with compact support? In other words, could be the parameter space  compact instead of finite?

This was pursuit by many authors as in~\cite{Lewellen,Mendivil}, and eventually Arbieto, Junqueira e Santiago~\cite{AJS} obtained several results in this setting assuming very weak sources of contractions. More recently, Melo~\cite{ITALO} generalized this in his thesis.

The theory of dynamical systems had a boost in mathematics with the advent of hyperbolic theory due to Smale \cite{Smale}. The horseshoe became the paradigmatic example and had two important topological dynamical features: expansiveness and the shadowing. These two notions were extensively studied by many mathematicians such as Das, Kato,  Sakai, Thakkar and many others. One of theirs best feature was that they were heavily used in the stability of hyperbolic differentiable dynamics, see \cite{Lan_Wen}.

It turns out, that in topological dynamics, this also leads to some type of stability (nowadays called topological stability), see~\cite{Hiraide}. Moreover, it was shown that the shadowing property is a necessary condition to topological stability. This was a seminal result that gives rise to the study of shadowing-type properties and even stronger forms of stability, like the Gromov-Hausdorff Stability by Arbieto and Morales, see~\cite{GromovHausdorff}.

Naturally, this notion was exploited for IFS by some authors, see~\cite{IRANIANOS}. However, as far as we understand it is not quite precise. Moreover, the study is done in the finite case. So, the purpose of this work is to clarify this issues and to prove in the case of a compact parameter space.

For this, we use a stronger type of shadowing for IFS and we propose another definition for topological stability. These definitions permit us to obtain our main results showing that this stronger shadowing is a necessary condition for topological stability, extending the original result for maps, which can be seen in~\cite{PILYUGIN}, to the context of IFS and we also show that this type of shadowing added to the expansiveness are sufficient condition for topological stability.

The notion of topological stability deals with proximity of objects, in our case IFS. Looking for a way to measure distance between two IFS we notice that the space of IFS with a fixed phase space $X$ is the hyperspace of the space of self continuous maps of $X$. With this result we obtain that the Hausdorff metric is a natural complete metric for the space of IFS. 
\section{Definitions}

Consider $(X,d)$ a compact metric space and the space $C^0(X)$ of the self continuous maps of $X$ with the $C^0$-metric given by:
$$d_{C^0}(f,g) = \max\{d(f(x),g(x) : \forall x \in X\}$$

A family $\{\omega_{\lambda} : \lambda \in \Lambda\} \subset C^0(X)$ such that $\Lambda$ is a compact metric space and $\omega: \Lambda \times X \rightarrow M $, given by $\omega(\lambda,x):=\omega_{\lambda}(x)$ is a continuous map, is said to be an \textit{Iterated Function System} (IFS for short), and we call $\omega$ its general map.
The space $\Lambda$ is called the \textit{parameter space} and $X$ is called the \textit{phase space} of this IFS. 
We will often refer to an IFS by its general map but it is important observe that different general maps can represent the same IFS.

The space $\Lambda^\N$ endowed with the product topology will be denoted by $\Omega$. For each $\sigma = (\lambda_1,\lambda_2, ...) \in \Omega$ and $k \in \N$ we will denote the map $\omega_{\sigma_k} := \omega_{\lambda_k} \circ ... \circ \omega_{\lambda_1}$ and $\omega_{\sigma_0} := id$. A sequence $\{x_n\} \subset X$ is called a \textit{chain} (it can also be found as \textit{branch of orbit} in the literature) for the IFS $\omega$ if for each $n$ there exists $\lambda_n \in \Lambda$ such that $x_n =\omega_{\lambda_n}(x_{n-1})$. 

Since any finite set with the discrete metric is a compact metric space, we observe that any IFS with finitely many partial maps is automatically included in our definition. 

For $(X,d)$ a metric space not necessarily compact, we will denote $\SK(X)$ as the collection of all  nonempty compact subsets of $X$ and call it the \textit{Hyperspace of X}. We endow it with the Hausdorff
metric as follows. Let $d(x,F) = \inf\{d(x, y): y \in F\}$. The Hausdorff metric is given by:
$$d_H(A,B)= \max \left\{\sup_{a \in A}d(a,B), \sup_{b \in B} d(b,A)\right\}$$ 

If $(X,d)$ is a complete (resp. compact) metric space, it can be proved (see~\cite{Barnsley}) that $(\SK(X), d_H)$ is also a complete (resp. compact) metric space. With these facts it was possible to show that every hyperbolic IFS has a invariant attractor, in~\cite{AJS} the result was extended to weakly hyperbolic IFS and then after in~\cite{ITALO} extended the result for $\mathbb{P}$-weakly hyperbolic IFS.

In our study we noticed that the IFS and the Hyperspace are even more related. Our first result in this work is to explicit the relation between them which is exposed in the next theorem.

\begin{thm}
	The space of IFS with phase space $X$ is the Hyperspace of $C^0(X)$.
\end{thm}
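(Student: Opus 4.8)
The plan is to exhibit an explicit bijection between the space of IFS with phase space $X$ and the hyperspace $\SK(C^0(X))$, once we make precise the equivalence under which two general maps represent ``the same'' IFS. The natural candidate is the \emph{image map}
$$\Phi(\omega) := \{\omega_\lambda : \lambda \in \Lambda\} \subseteq C^0(X),$$
which sends an IFS (presented by a general map $\omega$ over a compact parameter space $\Lambda$) to the set of its partial maps, and we declare two IFS to be the same precisely when their images coincide. Under this convention the proof splits into three tasks: $\Phi$ actually lands in $\SK(C^0(X))$; $\Phi$ is injective on equivalence classes (which is then tautological); and $\Phi$ is surjective.

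First I would show $\Phi(\omega) \in \SK(C^0(X))$. It is nonempty since $\Lambda \neq \emptyset$. For compactness, the key point is that the assignment $\lambda \mapsto \omega_\lambda$ is continuous from $\Lambda$ into $(C^0(X), d_{C^0})$: since $\Lambda \times X$ is a product of compact metric spaces, hence compact, the general map $\omega$ is uniformly continuous, so for every $\eps > 0$ there is $\delta > 0$ such that $d_\Lambda(\lambda, \lambda') < \delta$ forces $d(\omega_\lambda(x), \omega_{\lambda'}(x)) < \eps$ uniformly in $x$, i.e.\ $d_{C^0}(\omega_\lambda, \omega_{\lambda'}) \le \eps$. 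Thus $\Phi(\omega)$ is the continuous image of a compact set and is therefore a nonempty compact subset of $C^0(X)$.

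For surjectivity, given any $K \in \SK(C^0(X))$ I would realize it as an IFS by taking $\Lambda := K$, equipped with the metric inherited from $d_{C^0}$, as parameter space --- this is a compact metric space by hypothesis on $K$ --- and setting $\omega_\lambda := \lambda$ for each $\lambda \in K$, so that each parameter \emph{is} its own partial map. The only thing to verify is that the resulting general map, namely the evaluation $\omega : K \times X \to X$, $(f,x) \mapsto f(x)$, is continuous; this follows from the estimate $d(f_n(x_n), f(x)) \le d_{C^0}(f_n, f) + d(f(x_n), f(x))$, whose first term vanishes by $C^0$-convergence and whose second vanishes by continuity of the limit map $f$. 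Since $\Phi$ of this IFS is exactly $\{\lambda : \lambda \in K\} = K$, the map $\Phi$ is onto.

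The obstacle here is conceptual rather than technical: both continuity statements are routine once compactness of $X$ is exploited (it is what guarantees the $C^0$-metric is finite and that uniform continuity is available), so the genuine content is recognizing that the correct notion of ``same IFS'' is equality of images and checking that this is exactly what turns $\Phi$ into a bijection. Finally, transporting the structure through $\Phi$ and recalling that $\SK(C^0(X))$ carries the Hausdorff metric $d_H$ while $(C^0(X), d_{C^0})$ is complete, one obtains that $d_H$ is the natural complete metric on the space of IFS, as announced in the introduction.
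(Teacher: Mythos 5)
Your proposal is correct, and its skeleton is the same as the paper's: both directions are exactly the paper's two ``continences'' --- every IFS is a nonempty compact subset of $C^0(X)$ because $\lambda \mapsto \omega_\lambda$ is continuous on the compact $\Lambda$, and conversely every $K \in \SK(C^0(X))$ becomes an IFS by taking $K$ itself, with the $C^0$-metric, as parameter space.

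The one place where you genuinely diverge is the proof of continuity of $\varphi : \lambda \mapsto \omega_\lambda$. The paper first proves a separate proposition that the family of partial maps is uniformly equicontinuous (uniform continuity of $\omega$ in the $x$-direction), then argues: $\lambda_n \to \lambda$ gives pointwise convergence $\omega_{\lambda_n} \to \omega_\lambda$, and pointwise convergence of an equicontinuous family on a compact space is uniform, hence $C^0$-convergence. You instead apply uniform continuity of the general map $\omega$ on $\Lambda \times X$ in the $\lambda$-direction: $d_\Lambda(\lambda,\lambda') < \delta$ forces $d(\omega_\lambda(x),\omega_{\lambda'}(x)) < \varepsilon$ simultaneously for all $x$, which is precisely $d_{C^0}(\omega_\lambda,\omega_{\lambda'}) \le \varepsilon$. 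Your route is shorter and more elementary --- it avoids the Arzel\`a--Ascoli-type lemma (pointwise plus equicontinuous implies uniform) and in fact shows $\varphi$ is uniformly continuous. Two further points in your favor: you verify continuity of the evaluation map $(f,x) \mapsto f(x)$ in the surjectivity direction, a detail the paper asserts without proof, and you make explicit the identification (two general maps represent the same IFS iff their sets of partial maps coincide) under which the statement is literally a bijection; the paper leaves this convention implicit in its remark that different general maps can represent the same IFS.
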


This theorem permit us to conclude that the space of IFS with phase space $X$ is a complete metric space with the Hausdorff metric, and it comes from the fact that since $X$ is a compact metric space, then $C^0(X)$ is a complete space with $C^0$-topology. To fix the notation, from now on, given $\omega: \Lambda \times X \to X$ and $\tilde{\omega}:\tilde{\Lambda} \times X \to X$ both IFS, we will denote $d_H(\omega,\tilde{\omega}):= d_H(\{\omega_\lambda : \lambda \in \Lambda\}, \{\tilde{\omega}_\lambda : \lambda \in \tilde{\Lambda}\})$. 

This topology on the space of IFS leads us to questions about density, openness or even genericity of dynamical properties for IFS. 
As an example we can see that the set of transitive IFS is a $G_\de$ set. 

First of all, we say that an IFS is \textit{transitive} if for any $U,V$ open sets there is $\si \in \Om$ and $n$ such that $\om_{\si_n}(U) \cap V \neq \emptyset$. Let $\om$ be a transitive IFS. If we fix $U$ and $V$, for any $\tilde{\om}$ in a sufficiently small neighborhood of $\om$, there is $\tilde{\si} \in \tilde{\Om}$ such that $\tilde{\om}_{\tilde{\si}_n}(U) \cap V \neq \emptyset$. So using a countable basis of open sets of $X$ we can conclude that the set of transitive IFS is a $G_\de$ set.

\section{Shadowing Property}
For maps the shadowing property consists in guaranteeing the existence of an orbit close to a pseudo-orbit, which is a sequence similar to an orbit, but where small errors is permitted for each iterate. This notion can be translated to the context of IFS trading orbit by chain.

\begin{defi}
	Given a sequence $\{x_k\}$ in $X$ and $\delta > 0$, this sequence is said to be a \textit{$\delta$-chain}  if for each $k$ there exists $\lambda_k \in \Lambda$ such that $d(x_{k},\omega_{\lambda_k}(x_{k-1})) \leq \delta$. If the sequence is finite, we call it \textit{finite $\delta$-chain}.
\end{defi}

So, shadowing property in the context of IFS means that any $\vep>0$ there exists $\de>0$ such that any  $\de$-chain is $\vep$-close to a chain.

\begin{defi}
We say that an IFS $\omega$ has the \textit{shadowing property} if for any given $\varepsilon > 0$, there exists $\delta > 0$ such that for any $\delta$-chain $\{x_k\}$ there exists a chain $\{y_k\}$ such that $d(x_k,y_k) < \varepsilon$ for all $k\geq0$. In this case, we say that $\{y_k\}$ \textit{$(\varepsilon)$-shadows} $\{x_k\}$.
\end{defi}

\textit{Remark.} We remark that this definitions of shadowing property does not guarantee any relation between the sequence of parameters of the shadow and the sequence of parameters of the $\delta$-chain.
\vspace{0.3cm}

In fact, we give an example which has shadowing property but for some $\de$-chains it is impossible to shadow with the same sequence of parameters. Moreover, our example is constituted by rotations, so it is possible to have shadowing property in the IFS even with all partial maps not having shadowing property.

For some technical reasons, sometimes we would like to guarantee the existence of a shadow with the same sequence of parameters of the $\de$-chain. For this we use a stronger definition of shadowing for IFS presented in~\cite{Glavan2}.

\begin{defi}
We say that an IFS has \textit{concordant shadowing property} if for any $\varepsilon>0$ there exists $\delta>0$ such that every $\delta$-chain can be $\varepsilon$-shadowable by a chain with the same sequence of parameters.
\end{defi}

In 2006, Glavan and Gutu~\cite{Glavan} showed that this stronger definition is not so restrictive despite not having used this term. They also worked in a more general setting, considering an IFS any collection of continuous maps.

They start by the large and well known class of hyperbolic IFS (which can also be found as uniformly contracting IFS).

\begin{defi}
	We say that the an IFS $\{\omega_{\lambda} : \lambda \in \Lambda\}$ is \textit{hyperbolic} if
	$$
	\sup_{\lambda \in \Lambda} \sup_{x \neq y} \frac{d(\omega_\lambda(x),\omega_\lambda(y))}{d(x,y)}<1
	$$
\end{defi}

\begin{theorem}[Glavan, Gutu \cite{Glavan}]\label{unif_contracting_shadowing}
Every hyperbolic IFS has concordant shadowing property.
\end{theorem}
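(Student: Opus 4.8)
The plan is to exploit the uniform contraction rate shared by all the partial maps. Write $s := \sup_{\lambda \in \Lambda} \sup_{x \neq y} \frac{d(\omega_\lambda(x),\omega_\lambda(y))}{d(x,y)}$, which by the definition of hyperbolicity satisfies $s < 1$; consequently every $\omega_\lambda$ is Lipschitz with constant at most $s$, i.e. $d(\omega_\lambda(x),\omega_\lambda(y)) \leq s\, d(x,y)$ for all $x,y$. Given $\varepsilon > 0$, the claim is that $\delta := (1-s)\varepsilon$ witnesses concordant shadowing.

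First I would fix a $\delta$-chain $\{x_k\}_{k \geq 0}$. By the definition of a $\delta$-chain, for each $k \geq 1$ there is a witnessing parameter $\lambda_k \in \Lambda$ with $d(x_k, \omega_{\lambda_k}(x_{k-1})) \leq \delta$. The natural candidate for a concordant shadow is obtained by keeping exactly these parameters: set $y_0 := x_0$ and define recursively $y_k := \omega_{\lambda_k}(y_{k-1})$. By construction $\{y_k\}$ is a genuine chain that uses the same sequence of parameters $(\lambda_k)$ as the given $\delta$-chain, so the concordance requirement holds automatically once $\varepsilon$-closeness is established.

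The core of the argument is a telescoping induction on the errors $e_k := d(x_k, y_k)$. Using the triangle inequality and the contraction bound,
$$e_k = d(x_k, \omega_{\lambda_k}(y_{k-1})) \leq d(x_k, \omega_{\lambda_k}(x_{k-1})) + d(\omega_{\lambda_k}(x_{k-1}), \omega_{\lambda_k}(y_{k-1})) \leq \delta + s\, e_{k-1}.$$
Since $e_0 = 0$, iterating this recursion gives $e_k \leq \delta(1 + s + \cdots + s^{k-1}) = \delta \frac{1 - s^k}{1-s} < \frac{\delta}{1-s} = \varepsilon$ for every $k \geq 0$. Hence $\{y_k\}$ $\varepsilon$-shadows $\{x_k\}$ using the same parameters, which is exactly concordant shadowing.

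I do not expect a serious obstacle here: this is the standard geometric-series estimate for contractions. The only points that need care are that $s$ can be taken strictly less than $1$ (guaranteed by the definition of hyperbolicity, where the supremum itself is $<1$) and that the bound $\delta/(1-s)$ is uniform in $k$, so a single $\delta$ serves the entire chain. The conceptual content — that shadowing is achievable with the very parameters that witness the $\delta$-chain — is built into the choice of $y_k$ and, notably, requires neither compactness of $\Lambda$ nor any attainment of the supremum $s$.
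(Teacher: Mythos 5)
Your proof is correct. Note that the paper itself offers no proof of this statement: it is imported as a known theorem of Glavan and Gutu, cited as~\cite{Glavan}, so there is no internal argument to compare against line by line. Your argument --- fix the witnessing parameters $\lambda_k$ of the $\delta$-chain, run the genuine chain $y_0=x_0$, $y_k=\omega_{\lambda_k}(y_{k-1})$ with those same parameters, and control the errors by the recursion $e_k\le \delta+s\,e_{k-1}$ and the geometric series bound $e_k\le \delta\frac{1-s^k}{1-s}<\frac{\delta}{1-s}$ --- is the standard contraction argument, and it is essentially the one underlying the cited reference; it also correctly delivers \emph{concordant} shadowing, since the shadow uses exactly the parameters that witness the $\delta$-chain, and, as you observe, it needs neither compactness of $\Lambda$ nor of $X$. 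One microscopic caveat: with $\delta=(1-s)\varepsilon$ the strict inequality $e_k<\varepsilon$ can degenerate to equality in the trivial case $s=0$ (all partial maps constant), and the paper's definition of shadowing asks for $d(x_k,y_k)<\varepsilon$; taking $\delta=\tfrac{1}{2}(1-s)\varepsilon$ removes even this edge case. Otherwise there is nothing to repair.
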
 

The second class studied by them goes in the opposite direction of the first one. In this case any pair of points move away uniformly from each other by one iteration of any partial map.

\begin{defi}
We say that an IFS $\{\omega_{\lambda} : \lambda \in \Lambda\}$ is \textit{uniformly expanding} if
$$
 \inf_{\lambda \in \Lambda } \inf_{x \neq y} \frac{d(\omega_\lambda(x),\omega_\lambda(y)}{d(x,y)}>1
$$
\end{defi}

\begin{theorem}[Glavan, Gutu \cite{Glavan}]\label{expanding_shadowing}
	Every IFS uniformly expanding with all partial maps being surjective has concordant shadowing property.
\end{theorem}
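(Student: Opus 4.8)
The plan is to use the fact that uniform expansion plus surjectivity forces every partial map to be an invertible homeomorphism whose inverse is a uniform contraction, and then to construct the shadow \emph{backwards}. Set
$$
s := \inf_{\lambda \in \Lambda}\inf_{x \neq y}\frac{d(\omega_\lambda(x),\omega_\lambda(y))}{d(x,y)} > 1 ,
$$
so that $d(\omega_\lambda(x),\omega_\lambda(y)) \ge s\, d(x,y)$ for all $x,y\in X$ and all $\lambda$. This inequality makes each $\omega_\lambda$ injective; combined with the hypothesis that each $\omega_\lambda$ is surjective and with compactness of $X$, each $\omega_\lambda$ is a homeomorphism. Moreover, applying the inequality to $u=\omega_\lambda(x)$, $v=\omega_\lambda(y)$ shows that $d(\omega_\lambda^{-1}(u),\omega_\lambda^{-1}(v)) \le s^{-1} d(u,v)$, i.e. every $\omega_\lambda^{-1}$ is a contraction with factor $s^{-1}<1$. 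This is the structural fact on which everything rests.

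Given $\varepsilon>0$, I would choose $\delta>0$ with $\delta < (s-1)\varepsilon$. First treat a \emph{finite} $\delta$-chain $x_0,\dots,x_N$ with parameters $\lambda_1,\dots,\lambda_N$, so that $d(x_k,\omega_{\lambda_k}(x_{k-1}))\le\delta$ for $1\le k\le N$. Construct the candidate shadow by anchoring at the endpoint: put $y_N:=x_N$ and $y_{k-1}:=\omega_{\lambda_k}^{-1}(y_k)$ for $k=N,\dots,1$. By construction $\{y_k\}$ is a chain with exactly the same parameter sequence. The key estimate comes from the contraction of the inverses: since $\omega_{\lambda_k}^{-1}$ is $s^{-1}$-Lipschitz and $\omega_{\lambda_k}^{-1}(\omega_{\lambda_k}(x_{k-1}))=x_{k-1}$, the bound $d(x_k,\omega_{\lambda_k}(x_{k-1}))\le\delta$ gives $d(\omega_{\lambda_k}^{-1}(x_k),x_{k-1})\le \delta/s$. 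Writing $a_k:=d(x_k,y_k)$ and using the triangle inequality then yields $a_{k-1}\le \delta/s + s^{-1} a_k$ with $a_N=0$. Iterating this recurrence and summing the resulting geometric series produces $a_k \le \delta/(s-1)$ for every $k$, which is strictly less than $\varepsilon$ by the choice of $\delta$.

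For an \emph{infinite} $\delta$-chain $\{x_k\}_{k\ge 0}$, I would apply the finite construction to each truncation of length $N$, obtaining shadow chains whose initial points $y_0^{(N)}$ all lie in $X$. Since $X$ is compact, I can extract a convergent subsequence $y_0^{(N_j)}\to y_0^{*}$ and let $\{y_k^{*}\}$ be the chain generated by $y_0^{*}$ with the same parameter sequence $\{\lambda_k\}$. For each fixed $k$, continuity of the composition $\omega_{\lambda_k}\circ\cdots\circ\omega_{\lambda_1}$ gives $y_k^{(N_j)}\to y_k^{*}$, so the uniform bound $d(x_k,y_k^{(N_j)})\le \delta/(s-1)$ passes to the limit, giving $d(x_k,y_k^{*})\le \delta/(s-1)<\varepsilon$ for all $k\ge 0$. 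This is exactly concordant shadowing.

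I expect the crux to be the backward construction rather than the limiting argument. A naive forward construction would amplify the $\delta$-errors by the expansion factor at each step and diverge; the whole point is that inverting the maps converts expansion into contraction, and anchoring the approximation at the final point $x_N$ is what makes the error recurrence contractive and hence summable to the uniform bound $\delta/(s-1)$. The compactness passage from finite to infinite chains is then routine, the only care being to verify that the limiting sequence is a genuine chain with the prescribed parameters, which follows directly from continuity of the partial maps.
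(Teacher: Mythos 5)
Your proof is correct, but there is no in-paper argument to compare it against: the paper states this result purely as a quoted theorem of Glavan and Gutu, giving no proof, so your argument has to stand on its own --- and it does. The structural observation is right: the expansion inequality $d(\omega_\lambda(x),\omega_\lambda(y))\ge s\,d(x,y)$ with $s>1$ gives injectivity, surjectivity then gives bijectivity, and the same inequality applied to preimages shows each $\omega_\lambda^{-1}$ is $s^{-1}$-Lipschitz, hence each $\omega_\lambda$ is a homeomorphism. The backward anchoring $y_N=x_N$, $y_{k-1}=\omega_{\lambda_k}^{-1}(y_k)$ produces a chain with the same parameters (which is exactly what concordance demands), the recurrence $a_{k-1}\le \delta/s+s^{-1}a_k$ with $a_N=0$ sums to the uniform bound $\delta/(s-1)<\varepsilon$, and the truncation-plus-compactness passage to infinite chains is handled properly, including the point that the limit of the initial points generates, by continuity of $\omega_{\sigma_k}$, a genuine chain with the prescribed parameter sequence. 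It is worth recognizing that your backward construction is a disguised reduction to the contracting case already recorded as Theorem~\ref{unif_contracting_shadowing}: the family $\{\omega_\lambda^{-1}\}_{\lambda\in\Lambda}$ is uniformly contracting, and a finite $\delta$-chain of $\omega$ read in reverse order is a $(\delta/s)$-chain for that inverse family, so the finite stage could be delegated to the hyperbolic theorem instead of re-deriving the geometric-series estimate; the truncation-and-compactness step would still be needed, precisely because an infinite one-sided chain has no terminal point at which to anchor the inversion. Either route is sound; yours has the advantage of being self-contained and of making the error propagation explicit.
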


\section{Topological Stability}
Topological Stability means essentially that the behavior of an IFS remains for another IFS sufficiently close to the initial one, in a topological viewpoint. 

Differently than an orbit of a point for a map, for an IFS a point can have infinitely many chains, which together consist the orbit of the point. It is impossible compare any two chains between any different IFS, our goal is to analyze similar chains in similar IFS, and to be more precise we introduce the notion of \textit{$\delta$-compatibility}

\begin{defi}
	For $\omega$, $\tilde{\omega}$ two IFS and $\sigma$, $\tilde{\sigma}$ sequences with the same length in each parameter space, we say that the pair $(\sigma,\tilde{\sigma})$ is \textit{$\delta$-compatible} if for all $k$ we have:
	\begin{gather*}
	d_{C^0}(\omega_{\lambda_k},\tilde{\omega}_{\tilde{\lambda}_k})<\delta
	\end{gather*}
\end{defi}

\begin{defi}
We say that an IFS $\omega$ is \textit{topologically stable} if given $\varepsilon > 0 $, there exists $\delta > 0$ such that if $\tilde{\omega}$ is an IFS and $d_H(\omega,\tilde{\omega}) \leq \delta$, then for each $(\sigma,\tilde{\sigma})$ $\delta$-compatible there exists a continuous map $h: X \to X$ with the following properties:
\begin{enumerate}[label=(\roman*)]
	
	\item $d_{C^0}(\omega_{\sigma_k} \circ h,\tilde{\omega}_{\tilde{\sigma}_k})<\varepsilon$ for all $k \in \mathbb{N}$
	
	\item $d_{C^0}(h,id) < \varepsilon$
	
\end{enumerate}
\end{defi}

One of our main results, extending and clarifying the work of Rezaei and Nia in~\cite{IRANIANOS} is to show that a consequence of topological stability is the shadowing property, actually we go further and we prove that topological stability implies concordant shadowing property for IFS having a smooth compact manifold as phase space.

\begin{thm}
	Every topologically stable IFS with a smooth compact manifold as phase space has concordant shadowing property.
\end{thm}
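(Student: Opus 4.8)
The plan is to adapt the classical argument (as in~\cite{PILYUGIN}) that topological stability forces shadowing: given a suitable chain we manufacture a genuine chain of a nearby IFS that realizes it \emph{exactly}, and then use topological stability to transport that genuine chain back to a true chain of $\om$ which stays $\vep$-close and uses the same parameters. The smooth manifold structure of $X$ enters only in the construction of the nearby IFS.

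First I would fix $\vep>0$ and let $\de>0$ be the constant furnished by topological stability of $\om$. Because $X$ is a smooth compact manifold it has bounded geometry, so there is $\rho>0$ (depending on $\de$, with $\rho\to 0$ as $\de\to 0$) such that any two points $p,q\in X$ with $d(p,q)\le\rho$ can be joined by a homeomorphism $\phi$ of $X$ with $\phi(p)=q$ and $d_{C^0}(\phi,\mathrm{id})<\de$; moreover, by interpolating the displacement with a bump function in normal coordinates, these $\phi$ may all be taken $2$-Lipschitz, i.e.\ drawn from a single equicontinuous family. I would set the shadowing constant equal to this $\rho$. Now take any $\rho$-chain $\{x_k\}$ with an associated parameter sequence $\si=(\la_1,\la_2,\dots)$, so $d(x_k,\om_{\la_k}(x_{k-1}))\le\rho$ for all $k$. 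Writing $p_k:=\om_{\la_k}(x_{k-1})$, choose $\phi_k$ as above with $\phi_k(p_k)=x_k$ and $d_{C^0}(\phi_k,\mathrm{id})<\de$, and set $g_k:=\phi_k\circ\om_{\la_k}$, so that $g_k(x_{k-1})=x_k$ exactly and $d_{C^0}(g_k,\om_{\la_k})<\de$.

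The collection $\{\om_\la:\la\in\La\}$ is equicontinuous, since $\om$ is uniformly continuous on the compact product $\La\times X$, and because the $\phi_k$ are uniformly Lipschitz the enlarged family $\{\om_\la\}\cup\{g_k:k\ge1\}$ is again equicontinuous and uniformly bounded. By Arzel\`a--Ascoli its closure $\tilde\La\subset C^0(X)$ is compact, and evaluation $(g,x)\mapsto g(x)$ is jointly continuous on the equicontinuous set $\tilde\La\times X$; hence $\tilde\om:\tilde\La\times X\to X$, $\tilde\om(g,x)=g(x)$, is a bona fide IFS. This is the step where the IFS setting genuinely departs from the case of a single map: producing a \emph{compact} parameter space for the auxiliary system is the main obstacle, and Arzel\`a--Ascoli is exactly what makes it work, which is also why the equicontinuity of the $\phi_k$ has to be arranged in advance.

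Finally I would verify the hypotheses of topological stability for $\tilde\om$ and read off the shadow. Since $\{\om_\la\}\subseteq\tilde\La$ and every added map satisfies $d_{C^0}(g_k,\om_{\la_k})<\de$, passing to the closure gives $d_H(\om,\tilde\om)\le\de$. Taking $\tilde\si=(g_1,g_2,\dots)$ as parameter sequence in $\tilde\La^{\N}$, the pair $(\si,\tilde\si)$ is $\de$-compatible because $d_{C^0}(\om_{\la_k},g_k)<\de$, and by construction $\tilde\om_{\tilde\si_k}(x_0)=g_k\circ\cdots\circ g_1(x_0)=x_k$. Topological stability then yields a continuous $h:X\to X$ with $d_{C^0}(\om_{\si_k}\circ h,\tilde\om_{\tilde\si_k})<\vep$ and $d_{C^0}(h,\mathrm{id})<\vep$. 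Evaluating the first inequality at $x_0$ gives $d(\om_{\si_k}(h(x_0)),x_k)<\vep$ for all $k$. Setting $y_0:=h(x_0)$ and $y_k:=\om_{\si_k}(y_0)$ we have $y_k=\om_{\la_k}(y_{k-1})$, so $\{y_k\}$ is a chain for $\om$ \emph{with the same parameter sequence $\si$} as the original $\rho$-chain, while $d(y_k,x_k)<\vep$ for every $k\ge0$ (the case $k=0$ being $d(h(x_0),x_0)<\vep$). This is precisely concordant $\vep$-shadowing of $\{x_k\}$, which is what we had to establish.
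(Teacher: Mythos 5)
Your proof is correct, and while it follows the same overall strategy as the paper --- correct the errors of the chain by near-identity diffeomorphisms, compose them with the corresponding partial maps to get a nearby IFS that realizes the chain \emph{exactly} with a $\de$-compatible parameter sequence, invoke topological stability, and evaluate the resulting $h$ at $x_0$ --- it resolves the central technical obstacle by a genuinely different route. The paper never builds an auxiliary IFS for an infinite chain: its key construction (the lemma preceding Lemma~\ref{thm_FSP}) treats only \emph{finite} $\de$-chains, for which one correcting diffeomorphism per step suffices and the auxiliary parameter space $\{0,\dots,n-1\}\times\La$ is compact for free; the infinite case is then recovered in Lemma~\ref{thm_FSP} and the proof of Theorem 2 by shadowing every truncation $\{x_0,\dots,x_n\}$ with the same first $n$ parameters and extracting a convergent subsequence of the shadowing points $z_n$ in the compact manifold $M$. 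You instead handle the infinite chain in one stroke: since infinitely many correcting maps $g_k=\phi_k\circ\om_{\la_k}$ are needed simultaneously, compactness of the auxiliary parameter space is no longer automatic, and you purchase it with Arzel\`a--Ascoli after arranging the $\phi_k$ to be uniformly ($2$-)Lipschitz --- a uniform strengthening of the paper's Lemma~\ref{lemma_homogeneidade}, which asserts no Lipschitz control (and needs none on the paper's route); note also that your closure $\tilde\La\subset C^0(X)$ being a legitimate IFS is exactly an instance of the paper's Theorem 1. The trade-off is clear: your argument applies stability only once and avoids both the finite-shadowing lemma and the limiting argument, but it leans on the stronger translation lemma (true, by the standard bump-function estimate on a compact Riemannian manifold, yet it deserves the explicit derivative bound $\|D\phi-I\|\le \rho\sup|\beta'|$ that makes the family uniformly Lipschitz), whereas the paper gets by with the pointwise lemma at the cost of two extra reduction steps. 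Two trivial points to patch in a final write-up: when $x_k=\om_{\la_k}(x_{k-1})$ take $\phi_k=\mathrm{id}$ (the paper instead perturbs the chain so that consecutive points are distinct), and the uniform Lipschitz bound is what \emph{implies} equicontinuity of $\{g_k\}$, not a synonym for it.
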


For maps a converse for this theorem can be obtained by adding the hypothesis of expansiveness, we expected that this should be true for IFS too. 

Expansiveness, for maps, means essentially that for any two different points, their orbits move away from each other at least a constant. This notion can be translated for IFS trading orbits for chains for any sequence of parameters. 

\begin{defi}
	We say that an IFS $\omega$ is $expansive$ if there exists a constant $\eta>0$, called expansivity constant, such that for any $\sigma \in \Omega$ if $x,y \in X$ satisfy $d(\omega_{\sigma_n}(x),\omega_{\sigma_n}(y)) \leq \eta$ for all $n \in \mathbb{N}$, then $x=y$. We usually say that $\omega$ is $\eta-expansive$.
\end{defi} 

Similarly to what have been done for maps, we can prove that expansiveness added to concordant shadowing property implies in the existence of a unique shadow for a $\de$-chain where both of them have the same sequence of parameters. Using this fact we construct for each pair of sequences $\delta$-compatible a continuous map with the properties desired for topological stability, proving the last result of this work.

\begin{thm}\label{Exp+SP}
	Every expansive IFS with concordant shadowing property is topologically stable.
\end{thm}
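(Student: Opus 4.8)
The plan is to construct, for each $\delta$-compatible pair $(\sigma,\tilde\sigma)$, the required map $h$ pointwise by shadowing the $\tilde\omega$-orbits with $\omega$-chains that carry the parameter sequence $\sigma$, and then to exploit expansiveness twice: once to make this construction single-valued, and once to make it continuous. Let $\eta$ be the expansivity constant. Given the target $\varepsilon$, I would fix $\varepsilon'=\min\{\varepsilon,\eta/3\}$, so that $2\varepsilon'<\eta$ and $\varepsilon'\le\varepsilon$, and let $\delta>0$ be the constant furnished by the concordant shadowing property for $\varepsilon'$. First I record uniqueness of shadows: if two $\omega$-chains $\{y_k\}$ and $\{y'_k\}$ share the parameter sequence $\sigma$ and both $\varepsilon'$-shadow the same $\delta$-chain $\{x_k\}$, then $d(y_k,y'_k)\le d(y_k,x_k)+d(x_k,y'_k)<2\varepsilon'<\eta$ for all $k$; since $y_k=\omega_{\sigma_k}(y_0)$ and $y'_k=\omega_{\sigma_k}(y'_0)$, expansiveness forces $y_0=y'_0$, so the two chains coincide.

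Next I would define $h$. Take $\tilde\omega$ with $d_H(\omega,\tilde\omega)\le\delta$ and a $\delta$-compatible pair $(\sigma,\tilde\sigma)$. For $x\in X$ set $x_k=\tilde\omega_{\tilde\sigma_k}(x)$. Then $d(x_k,\omega_{\lambda_k}(x_{k-1}))=d(\tilde\omega_{\tilde\lambda_k}(x_{k-1}),\omega_{\lambda_k}(x_{k-1}))\le d_{C^0}(\tilde\omega_{\tilde\lambda_k},\omega_{\lambda_k})<\delta$, so $\{x_k\}$ is a $\delta$-chain for $\omega$ realized by the parameters $\sigma$. Concordant shadowing yields a unique $\omega$-chain $\{y_k\}$ with parameter sequence $\sigma$ and $d(x_k,y_k)<\varepsilon'$ for all $k$, and I set $h(x):=y_0$, so that $\omega_{\sigma_k}(h(x))=y_k$. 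Property (i) is then immediate from $d(\omega_{\sigma_k}(h(x)),\tilde\omega_{\tilde\sigma_k}(x))=d(y_k,x_k)<\varepsilon'\le\varepsilon$, and property (ii) from $d(h(x),x)=d(y_0,x_0)<\varepsilon'\le\varepsilon$, using $x_0=\tilde\omega_{\tilde\sigma_0}(x)=x$.

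The hard part is the continuity of $h$, for which I would first upgrade expansiveness to a uniform statement: for every $\gamma>0$ there exists $N\in\N$ such that, for every $\sigma\in\Omega$, if $d(\omega_{\sigma_n}(a),\omega_{\sigma_n}(b))\le\eta$ for all $0\le n\le N$, then $d(a,b)<\gamma$. I would prove this by contradiction, producing points $a_N,b_N$ and sequences $\sigma^{(N)}\in\Omega$ with $d(a_N,b_N)\ge\gamma$ yet $\eta$-closeness up to time $N$. Using compactness of $X$ and of $\Omega=\Lambda^{\N}$ (Tychonoff), I pass to convergent subsequences $a_N\to a$, $b_N\to b$, $\sigma^{(N)}\to\sigma$; the joint continuity of the general map $\omega$ then lets me pass to the limit coordinate-wise to obtain $d(\omega_{\sigma_n}(a),\omega_{\sigma_n}(b))\le\eta$ for every $n\ge 0$, while $d(a,b)\ge\gamma>0$, contradicting $\eta$-expansiveness. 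This compactness argument, and the careful handling of the product topology on $\Omega$, is the principal obstacle.

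Granting this lemma, continuity follows. Given $\gamma>0$, choose the corresponding $N$. The finitely many maps $\tilde\omega_{\tilde\sigma_0},\dots,\tilde\omega_{\tilde\sigma_N}$ are uniformly continuous on the compact space $X$, so there is $\rho>0$ with $d(\tilde\omega_{\tilde\sigma_k}(x),\tilde\omega_{\tilde\sigma_k}(x'))<\eta-2\varepsilon'$ for all $0\le k\le N$ whenever $d(x,x')<\rho$. Writing $y_k=\omega_{\sigma_k}(h(x))$, $y'_k=\omega_{\sigma_k}(h(x'))$, $x_k=\tilde\omega_{\tilde\sigma_k}(x)$ and $x'_k=\tilde\omega_{\tilde\sigma_k}(x')$, the triangle inequality gives, for $0\le k\le N$,
$$d(y_k,y'_k)\le d(y_k,x_k)+d(x_k,x'_k)+d(x'_k,y'_k)<2\varepsilon'+(\eta-2\varepsilon')=\eta.$$
Hence the uniform expansiveness lemma applied to $a=h(x)$, $b=h(x')$ along $\sigma$ yields $d(h(x),h(x'))<\gamma$, so $h$ is uniformly continuous. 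This verifies that the single constant $\delta$ witnesses topological stability, completing the proof.
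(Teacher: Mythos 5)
Your proposal is correct and follows essentially the same route as the paper's own proof: uniqueness of the concordant shadow via expansiveness (the paper's ``shadowing uniqueness property''), defining $h(x)$ as the initial point of the unique shadow of the $\delta$-chain $\{\tilde{\omega}_{\tilde{\sigma}_k}(x)\}$, and proving continuity by combining a finite-time expansiveness lemma with uniform continuity of the finitely many maps $\omega_{\sigma_k}$, $\tilde{\omega}_{\tilde{\sigma}_k}$ for $k\leq N$ and a triangle inequality. The only (harmless) deviation is that your expansiveness lemma is stated uniformly over all $\sigma\in\Omega$, requiring compactness of $\Omega=\Lambda^{\mathbb{N}}$, whereas the paper's Lemma fixes $\sigma$ and needs only compactness of $X$; since $h$ is built for one fixed pair $(\sigma,\tilde{\sigma})$ at a time, the fixed-$\sigma$ version already suffices.
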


\textit{Remark.} For the converse of the theorem the phase space is not required to be a manifold, it works for compact metric spaces in general.
\vspace{.3cm}

After give this proof, we observed that it is essentially the same of the proof given by Thakkar and Das in~\cite{DAS} the observation is that $\delta$-compatibility of sequences is equivalent to time varying maps $\delta$-close for them. 

\section{IFS of Homeomorphisms}
Although during the text we are considering IFS for maps not necessarily invertible we can also consider $Homeo(X)$ the space of self homeomorphisms of $X$ with the metric $d_h(f,g)=\max\left\{d_{C^0}(f,g), d_{C^0}(f^{-1},g^{-1})\right\}$ and replace $C^0(X)$ by $Homeo(X)$ in the theory.

In this case we need to consider $\Omega=\Lambda^{\Z^*}$, $\omega_{\sigma_k}(x)= \omega_{\lambda_k}^{-1} \circ ... \circ \omega_{\lambda_{-1}}^{-1}$ for negative $k$ and bilateral sequences instead of unilateral sequences in all definitions and in all our results.

When we gave the definition of expansiveness, actually we presented the notion of positively expansive IFS. For IFS of homeomorphisms the definition is the following.

\begin{defi}
	We say that an IFS $\omega$ is $expansive$ if there exists a constant $\eta>0$, called expansivity constant, such that for any $\sigma \in \Omega$ if $x,y \in X$ satisfy $d(\omega_{\sigma_n}(x),\omega_{\sigma_n}(y)) \leq \eta$ for all $n \in \Z$ then $x=y$. 
\end{defi}
\textit{Remark.} For shadowing property and topological stability if an IFS of homemorphisms satisfy the definitions for bilateral sequences, then it satisfies for unilateral sequences. For expansiveness it fails, since there are examples of IFS of homeomorphisms which are expansive but they are not positively expansive.

\section{Examples}
During our study we asked ourselves if it was possible to guarantee the existence of a shadow with the same sequence of parameters of the $\de$-chain in the traditional definition of shadowing property. After some time we found the following example answering negatively the question.

\begin{example}
	Consider $\T^1=\R \slash \Z$ the unit circle and the IFS given by the following general map:
	\begin{align*}
	\omega : [0,1] \times \T^1 &\to \T^1\\
	(\lambda,x) &\mapsto x + \lambda \mod 1
	\end{align*}
\end{example}

An interesting property of this IFS is that for any fixed $x \in \T^1$ we have $\omega([0,1]\times \{x\})= \T^1$ which implies that any sequence $\{x_n\}$ in $\T^1$ is a chain for the IFS. So any $\de$-chain can be shadowed by itself by changing the sequence of parameters proving that it has shadowing property. On the other hand, each partial map is a rotation and does not have shadowing property, thus if we fix a constant sequence of parameters given $\vep>0$, for any $\de>0$ there are $\de$-pseudo-orbits, that are $\de$-chains with constant sequence of parameters and cannot be shadowable by a chain with sequence of parameters.
\vspace{.3cm}

The following is an example of an expansive IFS of homeomorphisms with concordant shadowing property.
\begin{example}
	Consider $\T^2 = \R^2 \slash \Z^2$, a small $\varepsilon>0$ and $\Lambda$  the closed ball in $\R^2$ centered in zero with radius $\varepsilon$. For $x=(x_1,x_2) \in \T^2$ consider the linear map  $f(x_1,x_2)=(2x_1+x_2,x_1+x_2)$. The IFS will be given by the following general map:
	\begin{align*}
	\omega : \Lambda \times \T^2 &\to \T^2 \\
	(\lambda,x) &\mapsto f(x) + \lambda \mod 1
	\end{align*}
\end{example}

	Firstly, we observe that $\omega_0$ is the linear automorphism on the torus, which is expansive.
	
	For a fixed $\sigma=(..., \lambda_{-2}, \lambda_{-1},\lambda_1,\lambda_2,...) \in \Omega$ the structure of this IFS permits us to simplify the computation of the distance between the iterates of chains with this sequence of parameters. Let $x \in X$, the first iterate relative to $\sigma$ is:	
	\begin{gather*}
	\omega_{\lambda_1}(x)= f(x) + \lambda_1 \mod 1
	\end{gather*}
	
	The second iterate for this sequence is:
	\begin{align*}
	\omega_{\sigma_2}(x) &= \omega_{\lambda_2}(\omega_{\lambda_1}(x))\\&= f(f(x) + \lambda_1) + \lambda_2 \mod 1  \\&= f^2(x) + f(\lambda_1) + \lambda_2 \mod 1 \\&=
	\omega^2_0(x) + f(\lambda_1) + \lambda_2 \mod 1
	\end{align*} 
	
	By induction, we obtain for each positive $k$:
	\begin{gather}\label{omega1}
	\omega_{\sigma_k}(x)= \omega^k_0(x) + \sum_{i=1}^{k}f^{k-i}(\lambda_i) \mod 1
	\end{gather}
	
	Analogously, we find that:
	\begin{gather}\label{omega2}
		\omega_{\sigma_{-k}}(x)= \omega^{-k}_0(x) - \sum_{i=1}^{k}f^{-k+i}(\lambda_{-i}) \mod 1
	\end{gather}
	
	If we take $y \in \T^2$, then for any $k \in \Z$ we have the following:
	\begin{gather*}
	d(\omega_{\sigma_k}(x), \omega_{\sigma_k}(y)) = d(\omega_0^k(x),\omega_0^k(y))
	\end{gather*}
	
	Since $\omega_0$ is expansive, so is the IFS.
	
	We can also prove that the IFS has concordant shadowing property, it comes from the fact that $\omega_0$ has shadowing property. Let $\varepsilon>0$, then we have a $\delta>0$ from the shadowing property of the map $\omega_0$. To prove that the IFS has concordant shadowing property we take $\{y_k\}$ a $\delta$-chain for a sequence $\sigma=(...,\lambda_{-2},\lambda_{-1}, \lambda_1,\lambda_2,...) \in \Omega$, then we can construct a sequence $\{x_k\}$ such that for any $k>0$ we have:
	\begin{gather}\label{shadowing1}
	y_k= x_k - \sum_{i=1}^{k} f^{k-i}(\lambda_i) \mod 1
	\end{gather}

	\begin{gather}\label{shadowing1n}
	y_{-k}= x_{-k} + \sum_{i=1}^{k} f^{-k+i}(\lambda_{-i}) \mod 1
	\end{gather}
	
	For $k>0$, if we look for $\omega_{\lambda_{k+1}}(y_k)$, from the linearity of $f$ we have:
	\begin{align}\label{shadowing2}
	\omega_{\lambda_{k+1}}(y_k) &= f(y_k) - \lambda_{k+1} \mod 1 \nonumber \\
	&= f(x_k) - \left(\sum_{i=1}^{k} f^{k+1-i}(\lambda_i)\right) - \lambda_{k+1} \mod 1\\
	&=\omega_0(x_k)- \sum_{i=1}^{k+1} f^{k+1-i}(\lambda_i) \mod 1\nonumber
	\end{align}
	
	Analogously we have:
	\begin{gather}\label{shadowing2n}
	\om_{\lambda_{-k+1}}(y_{-k})= \om_0(x_{-k}) + \sum_{i=1}^{k-1} f^{-k+i-1}(\lambda_{-i}) \mod 1
	\end{gather}
	
	On the other hand, using (\ref{shadowing1}) and (\ref{shadowing1n}) we can explicit the expression of $y_{k+1}$ and $y_{-k+1}$.	
	So, from (\ref{shadowing2}) and (\ref{shadowing2n})  we obtain that for any $k \in \Z$:
	\begin{gather*}
	d(x_{k+1}, \omega_0(x_k))=d(y_{k+1}, \omega_{\lambda_{k+1}}(y_k))<\delta
	\end{gather*}
	
	Thus, $\{x_k\}$ is a $\delta$-pseudo orbit of $\om_0$ and by the shadowing property there is $z_0$ such that for any $k \in \Z$ we have:
	\begin{gather*}
	d(x_k,\omega_0^k(z_0))<\varepsilon
	\end{gather*}
	
	From the expression obtained for $\omega_{\sigma_k}(x)$ in (\ref{omega1}) and (\ref{omega2}), we can conclude that:
	\begin{gather*}
	d(y_k,\omega_{\sigma_k}(z_0))=d(x_k,\omega_0^k(z_0))<\varepsilon
	\end{gather*}
	Then it is proved the concordant shadowing property.

\textit{Remark.} Instead of the IFS be expansive, since $\om_0$ is not positively expansive, the IFS also cannot be. 

\section{Proof of Theorem 1}
A first observation is that the family of partial maps is uniformly equicontinuous, and it comes directly from the fact that the general maps is continuous.

\begin{Prop}
	The family of partial maps of an IFS is uniformly equicontinuous.
\end{Prop}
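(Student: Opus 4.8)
The plan is to deduce the uniform equicontinuity of the family $\{\omega_\lambda : \lambda \in \Lambda\}$ from a single application of the Heine--Cantor theorem to the general map $\omega$, using crucially that the parameter space $\Lambda$ is compact.

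First I would endow the product $\Lambda \times X$ with a product metric, for instance $D\big((\lambda,x),(\mu,y)\big) = \max\{d_\Lambda(\lambda,\mu),\, d(x,y)\}$, where $d_\Lambda$ denotes the metric on $\Lambda$. Since $\Lambda$ and $X$ are both compact metric spaces, the product $\Lambda \times X$ is again a compact metric space. By the definition of an IFS the general map $\omega \colon \Lambda \times X \to X$ is continuous, and a continuous map on a compact metric space is uniformly continuous; hence $\omega$ is uniformly continuous.

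Next I would unwind what this uniform continuity says: given $\varepsilon > 0$ there exists $\delta > 0$ such that $D\big((\lambda,x),(\mu,y)\big) < \delta$ implies $d\big(\omega(\lambda,x),\omega(\mu,y)\big) < \varepsilon$. The final step is to specialize to $\mu = \lambda$. If $d(x,y) < \delta$, then $D\big((\lambda,x),(\lambda,y)\big) = d(x,y) < \delta$, whence $d\big(\omega_\lambda(x),\omega_\lambda(y)\big) < \varepsilon$, and this holds for every $\lambda \in \Lambda$ with one and the same $\delta$. That is precisely the assertion that the family of partial maps is uniformly equicontinuous.

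There is essentially no hard part in the argument; the only substantive point --- and the reason the statement deserves to be isolated --- is that the compactness of $\Lambda$ is exactly what forces the modulus $\delta$ to be uniform in the parameter $\lambda$. Were $\Lambda$ merely a (noncompact) metric space, continuity of $\omega$ would still yield continuity of each individual $\omega_\lambda$, but the common choice of $\delta$ could break down, so compactness of the parameter space cannot be dropped from this reasoning.
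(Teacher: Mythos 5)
Your proof is correct and is essentially identical to the paper's own argument: both apply the Heine--Cantor theorem to the general map $\omega$ on the compact product $\Lambda \times X$ and then specialize the resulting uniform modulus of continuity to pairs $(\lambda,x),(\lambda,y)$ with the same parameter. Your write-up is in fact slightly more careful than the paper's, since you make the choice of product metric explicit before restricting to a fixed $\lambda$.
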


\begin{proof}[Proof]
	Let $\lambda \in \Lambda$. Since $\omega$ is continuous and $\Lambda \times X$ is compact, then $\omega$ is uniformly continuous which means that for any given $\varepsilon>0$ there exists $\delta>0$ such that for any $d(x,y)=d\left((\lambda,x),(\lambda,y \right)<\delta$ implies that ${d\left( \omega(\lambda,x),\omega(\lambda,y) \right)=d\left( \omega_\lambda(x),\omega_\lambda(y) \right)<\varepsilon}$.
\end{proof}

With this fact, we can construct an auxiliary continuous function and using it, we show that every IFS is a compact subset of $C^0(X)$ 

\begin{proof}[Proof of Theorem 1]
	To see that $K \in \SK(C^0(X))$ is an IFS we just need to consider the parameter space as $K$ with the $C^0$-metric. The other continence requires a little bit more.
	
	Let $\{\omega_\lambda : \lambda \in \Lambda\}$ be an IFS, we want to show that it is a compact subset of $C^0(X)$. For this, we define $\varphi : \Lambda \to C^0(X)$ given by $\varphi(\lambda)=\omega_\lambda$ and we claim that $\varphi$ is continuous.
	
	Let $\{\lambda_n\}$ be a sequence in $\Lambda$ converging to $\lambda$. By the continuity of the first variable of $\omega : \Lambda \times X \to X$, we obtain that $\{\omega_{\lambda_n}\}$ converges pointwise to $\omega_\lambda$. As $\{\omega_\lambda : \lambda \in \Lambda\}$ is uniformly equicontinuous, so is $\{\omega_{\lambda_n}\}$. Since $X$ is compact, we obtain that the convergence is uniform, which implies convergence in the $C^0$-topology and consequently the continuity of $\varphi$. Therefore, $\varphi(\Lambda)=\{\omega_\lambda : \lambda \in \Lambda\}$ is compact and that completes the proof.
\end{proof}

\section{Proof of Theorem 2}
For some technical reasons sometimes we will need the phase space to be a smooth compact manifold, in those cases we will replace $X$ by $M$ and $d$ will be a Riemannian metric on $M$, and every time we refer to a manifod we will be in this context.

Similarly to the definition of shadowing property, we can give another definition which in advance seems to be weaker, permitting to obtain shadows only for finite $\de$-chains.

\begin{defi}
	We say that an IFS $\omega$ has the \textit{finite shadowing property} if for any given $\varepsilon > 0$, there exists $\delta > 0$ such that for any finite $\delta$-chain $\{x_1,...,x_n\}$ there exists a chain $\{y_k\} $ such that $d(x_k,y_k)<\varepsilon$, for $k=1,...,n$.
\end{defi}

As we are considering $\Lambda$ and $X$ compact, we have an equivalence between these two definitions. Clearly shadowing property implies finite shadowing property and the converse is given in the following lemma.

\begin{lemma}
	If an IFS has finite shadowing property, then it has shadowing property.
\end{lemma}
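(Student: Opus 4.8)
The plan is to run a compactness/diagonal argument: I would build a shadowing chain for the infinite $\de$-chain as a limit of the finite shadows provided by the finite shadowing property. The two ingredients that make this work are that the symbolic space $\Om = \La^{\N}$ is compact (by Tychonoff, since $\La$ is a compact metric space), and that for each fixed $k$ the finite composition $\om_{\si_k}=\om_{\la_k}\circ\cdots\circ\om_{\la_1}$ depends continuously on the pair $(z,\si)\in X\times\Om$, i.e. the map $(z,\si)\mapsto\om_{\si_k}(z)$ is continuous. This last fact follows from the joint continuity of the general map $\om$ by an immediate induction on $k$, together with the continuity of the coordinate projections $\Om\to\La$.

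First I would fix $\vep>0$ and apply the finite shadowing property with $\vep/2$ in place of $\vep$, obtaining $\de>0$. Let $\{x_k\}_{k\geq 0}$ be an arbitrary $\de$-chain. For each $n\in\N$ the truncation $\{x_0,\dots,x_n\}$ is a finite $\de$-chain, so finite shadowing yields a chain $\{y^n_k\}_{k\geq 0}$ with $d(x_k,y^n_k)<\vep/2$ for $0\leq k\leq n$. Choosing for each $n$ a sequence of parameters realizing this chain, I encode it as a pair $(y^n_0,\si^n)\in X\times\Om$, so that $y^n_k=\om_{\si^n_k}(y^n_0)$ for all $k$.

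Next I would pass to the limit. Since $X\times\Om$ is compact, the sequence $(y^n_0,\si^n)$ admits a subsequence $(y^{n_j}_0,\si^{n_j})$ converging to some $(y_0,\si)\in X\times\Om$; in particular $\si^{n_j}_i\to\si_i$ in $\La$ for each coordinate $i$. I then define the chain $\{y_k\}_{k\geq 0}$ by $y_k:=\om_{\si_k}(y_0)$. Fixing $k$ and letting $j\to\y$ along indices with $n_j\geq k$, the continuity of $(z,\si)\mapsto\om_{\si_k}(z)$ gives $y^{n_j}_k=\om_{\si^{n_j}_k}(y^{n_j}_0)\to\om_{\si_k}(y_0)=y_k$, while $d(x_k,y^{n_j}_k)<\vep/2$ for all such $j$. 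Passing to the limit in the distance (which is continuous) yields $d(x_k,y_k)\leq\vep/2<\vep$ for every $k\geq 0$. Hence the chain $\{y_k\}$ $(\vep)$-shadows $\{x_k\}$, and the IFS has the shadowing property.

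The step I expect to be the main obstacle is checking that the limiting object $\{y_k\}$ is again a genuine chain and that it shadows the entire infinite $\de$-chain at once, rather than only on an initial segment. Both issues are resolved by the continuity of the finite compositions $\om_{\si_k}$ under the product topology on $\Om$, combined with the compactness of $\La$ (hence of $\Om$), which is precisely where the compactness hypotheses on the parameter and phase spaces are used. Finally, the strict inequality required by the definition of shadowing is recovered from the non-strict limiting bound by having applied finite shadowing with $\vep/2$ at the outset.
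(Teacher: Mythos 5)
Your proof is correct and takes essentially the same route as the paper: truncate the $\delta$-chain, shadow each truncation via the finite shadowing property, and use compactness of $X$ and $\Lambda$ (hence of $X\times\Omega$) together with continuity of the finite compositions $\omega_{\sigma_k}$ to pass to a limiting chain that shadows the whole sequence. The only difference is cosmetic: your $\varepsilon/2$ device cleanly recovers the strict inequality after taking limits, a point the paper's proof glosses over by asserting $d(\omega_{\sigma_n}(y),x_n)<\varepsilon$ directly from a limit of strict inequalities.
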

\begin{proof}[Proof]
	Let $\varepsilon>0$, by hypothesis there exists $\delta>0$ such that every finite $\delta$-chain can be $\varepsilon$-shadowable. Let $\{x_k\}$ be a $\delta$-chain, then for each natural number $i$ there exists $y_i \in X$ and $\sigma^i=(\lambda_1^i, \lambda_2^i,...) \in \Omega$ such that: 
	\begin{equation} \label{eq_1}
	d(\omega_{\sigma_j^i}(y_i),x_j)<\varepsilon, j=0,...,i.
	\end{equation} 
	
	By compactness of $\Lambda$ and $X$, we can assume that $\{y_i\}$ converges to $y$ and $\{\lambda_k^i\}$ converges to $\lambda_k$ for all $k$. From this we can construct $\sigma = (\lambda_1, \lambda_2, ...)$.
	Fixed $n$, we have that $(\lambda_1^i,...,\lambda_n^i)$ converges to $(\lambda_1,...,\lambda_n)$. As $\omega$ is continuous, $\omega_{\sigma_n^i}$ converges to $\omega_{\sigma_n}$ and by (\ref{eq_1}) $d(\omega_{\sigma_n}(y),x_n)<\varepsilon$.
	Thus, as $n$ is arbitrary, we have that $\{\omega_{\sigma_k}(y)\}$ $\varepsilon$-shadows $\{x_k\}$ and the IFS has shadowing property. 
\end{proof} 

If we assume the dimension of the manifold to be at least $2$, starting with the identity map and making a local translation using a bump function we can commute points sufficiently close obtaining a diffeomorphism close to the identity in $C^0$-topology. In the uni-dimensional case a simple rotation can do this, so we have the following lemma.
\vspace{.5cm}

\begin{lemma}\label{lemma_homogeneidade}
	For any given $\varepsilon>0$, there exists $\de>0$ such that if $x \neq y$ and $d(x,y)<\de$ , then there exists a diffeomorphism $f$ such that:
	\begin{enumerate}[label=(\roman*)]		\item $d_{C^0}(f,id)<\varepsilon$		
	\item $f(x)=y$
		
	\end{enumerate}
\end{lemma}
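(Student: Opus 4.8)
The plan is to realize $f$ as a localized translation transported to $M$ through the exponential map, following the heuristic preceding the statement. Since $M$ is a smooth compact Riemannian manifold, compactness furnishes a uniform injectivity radius $r_0>0$ such that for every $p\in M$ the exponential map $\exp_p$ is a diffeomorphism from the ball $B(0,r_0)\subset T_pM$ onto a neighborhood of $p$, together with a uniform constant $L\ge 1$ satisfying $L^{-1}|v-w|\le d(\exp_p v,\exp_p w)\le L|v-w|$ for all $p$ and all $v,w\in B(0,r_0)$. This uniformity in the base point $p$ is precisely what will allow a single $\de$ to serve for every admissible pair $(x,y)$.

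First I would fix once and for all a smooth bump function $\beta:\R^n\to[0,1]$, where $n=\dim M$, equal to $1$ on $B(0,r_0/4)$ and vanishing outside $B(0,r_0/2)$, and set $C_\beta:=\sup|\nabla\beta|$. Given $x\neq y$ with $d(x,y)<\de$, I identify $T_xM$ with $\R^n$ and put $v:=\exp_x^{-1}(y)$, so that $|v|\le L\,d(x,y)<L\de$. Then I define $F:\R^n\to\R^n$ by $F(w)=w+\beta(w)\,v$. Its differential is $DF(w)=I+v\otimes\nabla\beta(w)$, whose non-identity part has operator norm at most $|v|\,C_\beta<LC_\beta\,\de$; choosing $\de$ so small that $LC_\beta\,\de<1$ makes $F$ a local diffeomorphism. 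Since $F$ coincides with the identity outside $B(0,r_0/2)$, it is proper, hence a covering of $\R^n$ onto itself, and therefore a diffeomorphism. It satisfies $F(0)=v$, equals $id$ near $\partial B(0,r_0)$, and (imposing additionally $L\de<r_0/2$) maps $B(0,r_0)$ into itself.

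Next I transport $F$ to the manifold by declaring $f:=\exp_x\circ F\circ\exp_x^{-1}$ on $\exp_x(B(0,r_0))$ and $f:=id$ elsewhere; the two definitions agree on the overlap because $F$ is the identity there, so $f$ is a well-defined diffeomorphism of $M$ with $f(x)=\exp_x(v)=y$, giving (ii). For (i), every $z$ outside $\exp_x(B(0,r_0))$ is fixed, while for $z=\exp_x(w)$ inside one estimates $d(f(z),z)=d(\exp_x(F(w)),\exp_x(w))\le L|F(w)-w|=L\,\beta(w)\,|v|\le L|v|<L^2\de$. Hence $d_{C^0}(f,id)\le L^2\de$, and it suffices to take $\de<\min\{\varepsilon/L^2,\ 1/(2LC_\beta),\ r_0/(2L)\}$.

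I expect the only genuine subtlety to be the uniformity of $r_0$ and $L$ over all base points $x\in M$, which is where compactness enters essentially; once these constants are fixed, the remainder is the routine check that $F$ is a diffeomorphism and the bi-Lipschitz distance bound above. The one-dimensional case needs no separate treatment, since the construction goes through verbatim with $n=1$ (a rotation on each circle component being the simplest instance), matching the remark that precedes the lemma.
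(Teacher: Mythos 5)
Your proof is correct and takes essentially the same route as the paper, which offers only the one-line sketch preceding the lemma (a local translation by a bump function, with a rotation in the one-dimensional case) and no formal argument; your version via the exponential map, with a uniform injectivity radius and bi-Lipschitz constants, simply supplies the uniformity in the base point that the sketch leaves implicit. The only cosmetic difference is that you observe the bump-function construction already covers $n=1$, whereas the paper treats that case separately by a rotation.
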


The following is the key lemma for the proof of Theorem 2. Having an IFS and a $\delta$-chain we shall construct another IFS as close as wanted to the one we have. We also obtain a chain for this new IFS close to the initial $\delta$-chain and such that their sequences are $\delta$-compatible, with this, the theorem becomes easy.

\begin{lemma}
	Let $\omega$ be an IFS with a manifold $M$ as phase space. Given $\Delta > 0$, there exists $\delta>0$ such that if $\{x_0,...,x_n\}$ is a finite $\delta$-chain with sequence $\sigma$, then there exists an IFS $\tilde{\omega}$ satisfying $d_H(\omega, \tilde{\omega}) < \Delta$ and a sequence $\tilde{\sigma}$ such that $(\sigma,\tilde{\sigma})$ is $\Delta$-compatible  satisfying $d(\tilde{\omega}_{\tilde{\sigma}_k}(x_0), x_k)<\Delta$ for $k=0,...,n$.
\end{lemma}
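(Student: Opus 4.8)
The plan is to correct the small errors of the $\delta$-chain one step at a time, by composing each partial map with a diffeomorphism close to the identity, and then to package these corrected maps into a new IFS whose parameter space enlarges $\Lambda$ by finitely many isolated points. First I would apply Lemma \ref{lemma_homogeneidade} with $\varepsilon=\Delta$ to obtain a threshold $\delta_0>0$ such that any two distinct points at distance less than $\delta_0$ can be joined by a diffeomorphism that is $\Delta$-close to the identity in the $C^0$-metric, and then set $\delta=\delta_0/2$; this is the $\delta$ of the statement. Given a finite $\delta$-chain $\{x_0,\dots,x_n\}$ with sequence $\sigma=(\lambda_1,\dots,\lambda_n)$, write $z_k=\omega_{\lambda_k}(x_{k-1})$, so that $d(x_k,z_k)\le\delta<\delta_0$ by definition of a $\delta$-chain. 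For each $k$ with $x_k\neq z_k$, Lemma \ref{lemma_homogeneidade} furnishes a diffeomorphism $h_k$ with $d_{C^0}(h_k,id)<\Delta$ and $h_k(z_k)=x_k$; when $x_k=z_k$ I simply set $h_k=id$. I then define the corrected maps $h_k\circ\omega_{\lambda_k}$.

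Next I would assemble these into a bona fide IFS. Take $\tilde{\Lambda}=\Lambda\sqcup\{*_1,\dots,*_n\}$, metrized so that $\Lambda$ keeps its metric and each $*_k$ is an isolated point at a fixed positive distance from everything else; this is again a compact metric space. Define $\tilde{\omega}:\tilde{\Lambda}\times M\to M$ by $\tilde{\omega}(\lambda,\cdot)=\omega_\lambda$ for $\lambda\in\Lambda$ and $\tilde{\omega}(*_k,\cdot)=h_k\circ\omega_{\lambda_k}$. Since the $*_k$ are isolated, joint continuity of the general map reduces to continuity of $\omega$ together with continuity of each $h_k\circ\omega_{\lambda_k}$, so $\tilde{\omega}$ is an IFS. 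For the Hausdorff estimate, every original map $\omega_\lambda$ survives verbatim in $\tilde{\omega}$, while each new map satisfies $d_{C^0}(h_k\circ\omega_{\lambda_k},\omega_{\lambda_k})\le d_{C^0}(h_k,id)<\Delta$, hence lies $\Delta$-close to a member of the original family; both suprema defining $d_H(\omega,\tilde{\omega})$ are therefore below $\Delta$. Choosing $\tilde{\sigma}=(*_1,\dots,*_n)$ yields $d_{C^0}(\omega_{\lambda_k},\tilde{\omega}_{*_k})<\Delta$ for every $k$, which is exactly $\Delta$-compatibility of $(\sigma,\tilde{\sigma})$.

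The decisive point is that the corrected chain lands exactly on the $\delta$-chain, with no accumulation of error. I would verify by induction that the genuine chain $y_k:=\tilde{\omega}_{\tilde{\sigma}_k}(x_0)$ equals $x_k$: indeed $y_0=x_0$, and if $y_{k-1}=x_{k-1}$ then $y_k=h_k(\omega_{\lambda_k}(x_{k-1}))=h_k(z_k)=x_k$. Hence $d(\tilde{\omega}_{\tilde{\sigma}_k}(x_0),x_k)=0<\Delta$ for all $k$. This works cleanly precisely because each $h_k$ is built to send $\omega_{\lambda_k}(x_{k-1})$—not the image of the as-yet-unknown true chain—onto $x_k$, and the inductive hypothesis guarantees these two coincide, so the correction never has to chase a moving target. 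The only genuinely delicate bookkeeping is confirming that $\tilde{\omega}$ qualifies as an IFS (compact parameter space and jointly continuous general map) and that the degenerate case $x_k=z_k$ is absorbed by taking $h_k=id$; neither presents a real obstruction, and I expect them to be the only places where care is needed.
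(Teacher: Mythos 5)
Your proof is correct, and it rests on the same core strategy as the paper's: apply Lemma~\ref{lemma_homogeneidade} to produce near-identity diffeomorphisms $h_k$ that correct each step of the $\delta$-chain, compose them with the partial maps, and enlarge the parameter space to package the corrected maps into a new IFS $\tilde{\omega}$. The differences are in the implementation, and both work in your favor. The paper takes $\tilde{\Lambda}=\{0,\dots,n-1\}\times\Lambda$, so that every map of $\tilde{\omega}$ is a corrected map $h_k\circ\omega_\lambda$, whereas you take the disjoint union $\Lambda\sqcup\{*_1,\dots,*_n\}$, which keeps all the original maps inside $\tilde{\omega}$ and makes one half of the Hausdorff estimate trivial. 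More substantially, since Lemma~\ref{lemma_homogeneidade} requires $x\neq y$, the paper first perturbs the $\delta$-chain into a nearby $3\delta$-chain $\{y_k\}$ with $\omega_{\lambda_{k+1}}(y_k)\neq y_{k+1}$, which forces the extra bookkeeping $3\delta<\Delta$ and yields only the approximate conclusion $d(\tilde{\omega}_{\tilde{\sigma}_k}(x_0),x_k)<\Delta$; your choice of $h_k=id$ in the degenerate case $x_k=\omega_{\lambda_k}(x_{k-1})$ avoids the perturbation entirely and gives the exact equality $\tilde{\omega}_{\tilde{\sigma}_k}(x_0)=x_k$, which is if anything slightly cleaner and stronger than what the paper proves.
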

\begin{proof}[Proof]
	For $\De>0$, from lemma~\ref{lemma_homogeneidade} we obtain $\de>0$ such that if $d(x,y)<3\de$ then there exists a diffeomorphism $f$ such that:
	
	\begin{enumerate}[label=(\roman*)]
		
		\item $d_{C^0}(f,id)<\Delta$
		
		\item $f(x)=y$
		
	\end{enumerate}
	
	We assume $3\de<\De$. Let $\{x_0,...,x_n\}$ be a $\delta$-chain. Since it is finite, using triangle inequality we can take $\{x_0=y_0,...,y_n\} $ a $3\delta$-chain such that:
	\begin{enumerate}[label=(\roman*)]
		
		\item $d(x_k,y_k)<\Delta$, $k=0,..,n$
		
		\item $\omega_{\lambda_{k+1}}(y_k) \neq y_{k+1}$, $k=0,...,n-1$
	\end{enumerate}
	
	So for $k=0,...,n-1$ there exists $h_k$ such that $d_{C^0}(h_k,id)<\Delta$ and $h_k(\omega_{\lambda_{k+1}}(y_k))=y_{k+1}$.
	
	We define $\tilde{\Lambda}=\{0,...,n-1\} \times \Lambda$ and $\tilde{\omega}:\tilde{\Lambda} \times M \rightarrow M$ where $\tilde{\omega}(k,\lambda,x):=h_k \circ \omega_{\lambda}(x)$. As $\omega$ is continuous, so is $\tilde{\omega}$ and then it is a general map of an IFS. We claim that for each $k \in \{0,...,n-1\}$ and $\lambda \in \Lambda$ we have $d_{C^0}(\tilde{\omega}_{(k,\lambda)},\omega_{\lambda}) < \Delta$. 
	
	Let $x \in M$, then we have:
	\begin{equation} \label{eq_lemma6}
	d(\tilde{\omega}_{(k,\lambda)}(x),\omega_{\lambda}(x))=d(h_k(\omega_{\lambda}(x)),\omega_{\lambda}(x)) \leq d_{C^0}(h_k,id) <\Delta
	\end{equation}
	As $x$ was arbitrary, we have $d_{C^0}(\tilde{\omega}_{(k,\lambda)},\omega_{\lambda}) < \Delta$ and then $d_H(\omega,\tilde{\omega})<\Delta$.
	
	We define $\tilde{\sigma}=(\tilde{\lambda}_1,...,\tilde{\lambda}_n)$, where $\tilde{\lambda}_k := (k,\lambda_k)$. Thus we have $\tilde{\omega}_{\tilde{\sigma}_{k}}(x_0) = y_k$ and from (\ref{eq_lemma6}) $(\sigma,\tilde{\sigma})$ is $\Delta$-compatible.
	
\end{proof}

\begin{lemma}\label{thm_FSP}
	Every topologically stable IFS with a manifold as phase space has finite shadowing property. Moreover, if the sequence of parameters of the finite $\de$-chain has $n$ elements, then they coincide with the firsts $n$ elements of the sequence of parameters of the shadow.
\end{lemma}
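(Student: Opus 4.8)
The plan is to feed the preceding lemma into the definition of topological stability. Fix $\varepsilon>0$. First I would apply topological stability with tolerance $\varepsilon/2$ to obtain a gauge $\delta_1>0$: whenever $d_H(\omega,\tilde\omega)\le\delta_1$ and $(\sigma,\tilde\sigma)$ is $\delta_1$-compatible, there is a continuous $h\colon M\to M$ with $d_{C^0}(\omega_{\sigma_k}\circ h,\tilde\omega_{\tilde\sigma_k})<\varepsilon/2$ for all $k$ and $d_{C^0}(h,id)<\varepsilon/2$. Set $\Delta=\min\{\delta_1,\varepsilon/2\}$ and feed this $\Delta$ into the preceding lemma; the $\delta>0$ that it returns is the constant witnessing finite shadowing.

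Now let $\{x_0,\dots,x_n\}$ be a finite $\delta$-chain with parameter sequence $\sigma=(\lambda_1,\dots,\lambda_n)$. The preceding lemma produces an IFS $\tilde\omega$ with $d_H(\omega,\tilde\omega)<\Delta$ and a sequence $\tilde\sigma$ such that $(\sigma,\tilde\sigma)$ is $\Delta$-compatible and $d(\tilde\omega_{\tilde\sigma_k}(x_0),x_k)<\Delta$ for $k=0,\dots,n$. Since $\Delta\le\delta_1$, both hypotheses of topological stability are met (note $\Delta$-compatibility implies $\delta_1$-compatibility), so I obtain the associated map $h$. Put $z=h(x_0)$ and consider the honest $\omega$-chain $\{\omega_{\sigma_k}(z)\}_k$. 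For $k=0,\dots,n$ the triangle inequality gives
\begin{equation*}
d(\omega_{\sigma_k}(z),x_k)\le d\big(\omega_{\sigma_k}(h(x_0)),\tilde\omega_{\tilde\sigma_k}(x_0)\big)+d\big(\tilde\omega_{\tilde\sigma_k}(x_0),x_k\big)<\tfrac{\varepsilon}{2}+\Delta\le\varepsilon,
\end{equation*}
where the first term is bounded by reading off property (i) of topological stability at the single point $x_0$, and the second by the preceding lemma. Thus $\{\omega_{\sigma_k}(z)\}$ $\varepsilon$-shadows the $\delta$-chain, and finite shadowing follows.

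The decisive point, and the source of the \emph{moreover} clause, is that the shadow I construct is $\omega_{\sigma_k}(z)$, i.e. a genuine orbit of $\omega$ under the \emph{same} sequence $\sigma$ as the original $\delta$-chain; extending $\sigma$ arbitrarily past index $n$ turns $\{\omega_{\sigma_k}(z)\}$ into an infinite chain whose first $n$ parameters coincide with those of the $\delta$-chain, exactly as claimed. I expect the only genuine obstacle to be the bookkeeping of the gauges: the single constant $\Delta$ has to serve two purposes at once, since it must not exceed $\delta_1$ so that topological stability can be invoked, yet must satisfy $\varepsilon/2+\Delta\le\varepsilon$ so that the final estimate closes, which is precisely why I take $\Delta=\min\{\delta_1,\varepsilon/2\}$. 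Everything else is the routine observation that a chain of $\omega$ carries a well-defined parameter sequence, so property (i) can legitimately be evaluated at $x_0$ alone.
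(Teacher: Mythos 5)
Your proposal is correct and follows essentially the same route as the paper's proof: feed the construction lemma's output $(\tilde\omega,\tilde\sigma)$ into the definition of topological stability, set $z=h(x_0)$, and close the estimate by the triangle inequality with the gauges chosen so that $\varepsilon/2+\Delta\le\varepsilon$. The only bookkeeping difference is that the paper extends the finite sequences $\sigma,\tilde\sigma$ to infinite $\Delta$-compatible ones (constant past index $n$) \emph{before} invoking topological stability, since its definition quantifies over all $k\in\mathbb{N}$; you perform this extension only at the end, but the fix is immediate and harmless.
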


\begin{proof}[Proof]
	Let $\omega:\Lambda \times M \to M$ be an IFS topologically stable with $dimM \geq2$. For a given $\varepsilon >0$, from the definition of topological stability we obtain a $\Delta >0$ such that if $\tilde{\omega}$ is an IFS with $d_H(\omega,\tilde{\omega})< \Delta$, for each $(\sigma,\tilde{\sigma})$ $\Delta$-compatible there exists a continuous map $h:M \rightarrow M$ with the following properties:
	\begin{enumerate}[label=(\roman*)]
		
		\item $d_{C^0}(\omega_{\sigma_k} \circ h,\tilde{\omega}_{\tilde{\sigma}_k})<\frac{\varepsilon}{2}$ for all $k \in \mathbb{N}$
		
		\item $d_{C^0}(h,id) < \frac{\varepsilon}{2}$
		
	\end{enumerate}
	
	We assume $\Delta < \frac{\varepsilon}{2}$.
	
	Let $\delta < \frac{\Delta}{6}$ and $\{x_0,...,x_n\}$ a $\delta$-chain with sequence $\sigma$, then from the previous lemma there exists $\tilde{\omega}$ with $d_H(\omega,\tilde{\omega})<\Delta$, $\tilde{\sigma}$ such that $(\sigma,\tilde{\sigma})$ is $\Delta$-compatible and $y_0$ such that $d(\tilde{\omega}_{\tilde{\sigma}_k}(y_0),x_k)<\Delta$ for $k=0,...,n$.
	
	We will consider, by now, $\sigma$ and $\tilde{\sigma}$ infinite by complete with $\lambda_k = \lambda_1$ and $\tilde{\lambda}_k=(1,\lambda_1)$ for $k \geq n$. We remark that $(\sigma, \tilde{\sigma})$ is still $\delta$-compatible.
	
	So, we obtain $h:M \rightarrow M$ a continuous map with the properties mentioned above.
	
	We consider $z_0 = h(y_0)$, then $\{\omega_{\sigma_k}(z_0)\}$ is clearly a chain for $\omega$ and we observe that for $k=1,...,n$:
	\begin{align*} 
	d(x_k,\omega_{\sigma_k}(z_0)) & =d(x_k, \omega_{\sigma_k}(h(y_0))  \\
	& \leq d(x_k, \tilde{\omega}_{\tilde{\sigma}_k}(y_0)) + d(\tilde{\omega}_{\tilde{\sigma}_k}(y_0),\omega_{\sigma_k}(h(y_0)) \\
	& \leq \Delta + \frac{\varepsilon}{2}\\
	& < \frac{\varepsilon}{2} + \frac{\varepsilon}{2} = \varepsilon
	\end{align*}
\end{proof}

\begin{corollary}
	Every topologically stable IFS having a manifold as phase space has shadowing property.
\end{corollary}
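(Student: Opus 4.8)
The plan is to deduce the corollary for free by composing two implications already established in the excerpt, so that essentially no new argument is required. The first ingredient is Lemma~\ref{thm_FSP}, which asserts that a topologically stable IFS whose phase space is a manifold has the finite shadowing property. The second is the earlier lemma stating that, for an IFS with $\Lambda$ and $X$ compact, the finite shadowing property already implies the full shadowing property. Concatenating these two implications yields the corollary at once.

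Concretely, I would first fix $\varepsilon>0$ and apply Lemma~\ref{thm_FSP} to the given topologically stable IFS $\omega$ over the compact manifold $M$, obtaining a $\delta>0$ such that every finite $\delta$-chain is $\varepsilon$-shadowed by a chain; this is precisely the defining property of finite shadowing. I would then feed this same $\delta$ into the finite-to-full shadowing lemma. There one takes an arbitrary infinite $\delta$-chain $\{x_k\}$, truncates it to finite $\delta$-chains of length $i$, shadows each truncation by a chain issuing from some base point $y_i$ with parameter sequence $\sigma^i$, and extracts convergent subsequences of the points $y_i$ and of each coordinate $\lambda_k^i$ using compactness of $X$ and $\Lambda$. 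Passing to the limit via continuity of the general map $\omega$ assembles a single chain $\{\omega_{\sigma_k}(y)\}$ that $\varepsilon$-shadows the entire infinite chain.

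Since both implications are already in hand, there is no genuine obstacle in the corollary itself. The only point demanding a moment of care is that the hypotheses dovetail: Lemma~\ref{thm_FSP} delivers finite shadowing on a manifold, while the finite-to-full lemma requires only compactness of $\Lambda$ and $X$, which holds here because the manifold $M$ is compact. All the substantive content — the diagonal and compactness limiting argument that upgrades finite shadows into a single infinite shadow — is carried by the earlier lemma, so the corollary reduces to a one-line concatenation of the two results.
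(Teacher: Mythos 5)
Your proposal is correct and is exactly the argument the paper intends: the corollary follows immediately by combining Lemma~\ref{thm_FSP} (topological stability implies finite shadowing on a compact manifold) with the earlier lemma upgrading finite shadowing to full shadowing via compactness of $\Lambda$ and $M$. Your remark that the compactness hypotheses of the finite-to-full lemma are satisfied here is the only point of care, and you handled it correctly.
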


the sequence of parameters of the finite $\de$-chain has $n$ elements, then they coincide with the firsts $n$ elements of the sequence of parameters of the shadow.

Since we can shadow a finite $\de$-chain coinciding its $n$ elements of sequence of parameters with the firsts $n$ elements of the shadow and $M$ is compact we can easily prove the Theorem 2.

\begin{proof}[Proof of Theorem 2]
	Let $\varepsilon>0$. Let us consider $\delta>0$ as obtained in the proof of lemma~\ref{thm_FSP} and let $\{x_k\}$ be a $\delta$-chain with with sequence $\sigma=(\lambda_1,\lambda_2,...)$. For each $n \in \mathbb{N}$ if we consider $\{x_0,...,x_n\}$ we obtain $z_n$ and $\sigma^n=(\lambda_1^n,\lambda_2^n,...)$ such that for all $0 \leq j \leq n$ we have $\lambda_j^n=\lambda_j$  and consequently
	\begin{align}\label{eq_SSP}
	d(x_k, \omega_{\sigma_k}(z_n)) =d(x_k, \omega_{\sigma_k^n}(z_n))<\varepsilon 
	\end{align}
	
	By compactness of $M$ we can consider $\{z_n\}$ convergent and $z_0$ its limit. Fixed $k \in \mathbb{N}$, from (\ref{eq_SSP}) we have that $d(x_k, \omega_{\sigma_k}(z_0))<\varepsilon$. 
	So, $\{\omega_{\sigma_k}(z_0)\}$ is a chain that $\varepsilon$-shadows $\{x_k\}$ with the same sequence.
\end{proof}

\section{Proof of Theorem 3}
As we mentioned before, we shall use the existence of a unique chain shadowing a $\de$-chain with the same sequence of parameters. 

\begin{defi}
	We say that an IFS has \textit{uniqueness shadowing property} if it has concordant shadowing property and there exists $\varepsilon>0$ such that for its respective $\delta$ from the concordant shadowing property we have that for any $\{x_k\}$ $\delta$-chain with sequence $\sigma$ there exists an unique $y$ such that $\{\omega_{\sigma_k}(y)\}$ $\varepsilon$-shadows $\{x_k\}$. 
\end{defi}

\begin{Prop}\label{uniqueness}
	If $\omega$ is an $\eta$-expansive IFS and it has concordant shadowing property, then $\omega$ has the shadowing uniqueness property.
\end{Prop}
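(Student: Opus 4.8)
The plan is to use expansiveness to upgrade the mere \emph{existence} of a concordant shadow into \emph{uniqueness}, by choosing the shadowing precision small compared to the expansivity constant $\eta$.

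First I would fix the precision. Since $\omega$ is $\eta$-expansive, set $\varepsilon := \eta/2$ (any $\varepsilon$ with $2\varepsilon \le \eta$ works). Feeding this $\varepsilon$ into the concordant shadowing property yields a $\delta>0$ with the property that every $\delta$-chain admits an $\varepsilon$-shadow along the \emph{same} sequence of parameters. This $\varepsilon$ together with its associated $\delta$ are exactly the data demanded by the uniqueness shadowing property, so the existence half is immediate: given any $\delta$-chain $\{x_k\}$ with sequence $\sigma$, concordant shadowing produces a point $y$ such that $\{\omega_{\sigma_k}(y)\}$ $\varepsilon$-shadows $\{x_k\}$.

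The heart of the argument is the uniqueness of such a $y$. Suppose $y$ and $y'$ both give $\varepsilon$-shadows of $\{x_k\}$ along the same sequence $\sigma$, so that $d(\omega_{\sigma_k}(y),x_k)<\varepsilon$ and $d(\omega_{\sigma_k}(y'),x_k)<\varepsilon$ for every $k$. By the triangle inequality, $d(\omega_{\sigma_k}(y),\omega_{\sigma_k}(y'))<2\varepsilon\le\eta$ for all $k$. Since both iterates are taken along one and the same $\sigma\in\Omega$, this is precisely the hypothesis in the definition of $\eta$-expansiveness, whence $y=y'$.

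The step I expect to be most delicate is conceptual rather than computational: one must invoke that the \emph{concordant} shadowing property delivers shadows with the same parameter sequence as the $\delta$-chain. This is exactly what permits evaluating both $\omega_{\sigma_k}(y)$ and $\omega_{\sigma_k}(y')$ along a \emph{single} $\sigma$, which is the form in which expansiveness is stated; with the ordinary (non-concordant) shadowing property the two shadows could follow different parameter sequences and expansiveness would not apply. The only quantitative care needed is the choice $2\varepsilon\le\eta$.
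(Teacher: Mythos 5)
Your proof is correct and follows essentially the same route as the paper: choose the shadowing precision $\varepsilon$ so that $2\varepsilon \le \eta$, get existence from concordant shadowing, and derive uniqueness from the triangle inequality together with expansiveness applied along the single parameter sequence $\sigma$. Your write-up is in fact cleaner than the paper's, which contains a typo in its triangle inequality (it writes $d(y_k,z_k)\leq d(y_k,x_k)+d(y_k,z_k)$ where the last term should be $d(x_k,z_k)$).
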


\begin{proof}[Proof]
	Let $\varepsilon< \frac{\eta}{2}$. As $\omega$ has concordant shadowing property, we obtain $\delta>0$ such that ever $\delta$-chain is $\varepsilon$-shadowable by a chain with the same sequence. Let $\{x_k\}$ be a $\delta$-chain with sequence $\sigma$, so there exists $y$ such that $\{\omega_{\sigma_k}(y)\}$ $\varepsilon$-shadows $\{x_k\}$.
	
	Now suppose there exists $z$ such that $\{\omega_{\sigma_k}(z)\}$ $\varepsilon$-shadows $\{x_k\}$, then we have:
	\begin{gather*}
	d(y_k,z_k) \leq d(y_k,x_k) + d(y_k,z_k)< 2\varepsilon<\eta.
	\end{gather*}
	
	Thus, as $\omega$ is $\eta$-expansive $z=y$ and $\omega$ has shadowing uniqueness property.
\end{proof}

\begin{lemma}\label{lemma_expansiveness}
	If $\omega$ is an $\eta$-expansive IFS and $\sigma=(\lambda_1,\lambda_2,...)$ is a sequence, then for any given $\mu>0$ there exists $N>0$ such that if $x,y \in M$ and $d(\omega_{\sigma_n}(x),\omega_{\sigma_n}(y)) \leq \eta$ for all $n \leq N$, then $d(x,y)< \mu$.
\end{lemma}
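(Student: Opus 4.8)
The plan is to upgrade the ``for all $n$'' formulation of expansiveness to a uniform finite estimate by a standard compactness-and-contradiction argument on the phase space $M$. Throughout, the sequence $\sigma=(\lambda_1,\lambda_2,\dots)$ is fixed, and I record at the outset that each iterate $\omega_{\sigma_n}=\omega_{\lambda_n}\circ\cdots\circ\omega_{\lambda_1}$ is continuous, being a finite composition of continuous partial maps; this is the only regularity I will need.

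Suppose, toward a contradiction, that the conclusion fails for some $\mu>0$. Then for every $N\in\mathbb{N}$ there exist points $x_N,y_N\in M$ with $d(\omega_{\sigma_n}(x_N),\omega_{\sigma_n}(y_N))\le\eta$ for all $n\le N$, yet $d(x_N,y_N)\ge\mu$. By compactness of $M$ I would extract a subsequence along which $x_N\to x$ and $y_N\to y$. The bound $d(x_N,y_N)\ge\mu$ passes to the limit, giving $d(x,y)\ge\mu$, so in particular $x\neq y$. Now fix any $n\in\mathbb{N}$: for all sufficiently large $N$ in the subsequence one has $N\ge n$, whence $d(\omega_{\sigma_n}(x_N),\omega_{\sigma_n}(y_N))\le\eta$; letting $N\to\infty$ and using the continuity of $\omega_{\sigma_n}$, together with the fact that a non-strict inequality is preserved under limits, yields $d(\omega_{\sigma_n}(x),\omega_{\sigma_n}(y))\le\eta$. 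Since $n$ was arbitrary this holds for every $n$, so $\eta$-expansiveness forces $x=y$, contradicting $d(x,y)\ge\mu>0$. This produces the required $N$, which may depend on $\sigma$ and $\mu$, exactly as the statement allows.

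The delicate step is the exchange of the two limits. The finite hypothesis only controls $d(\omega_{\sigma_n}(x_N),\omega_{\sigma_n}(y_N))$ in the range $n\le N$, so I must first fix the index $n$ and only then let $N\to\infty$ along the convergent subsequence, relying on the fact that the window $\{n:n\le N\}$ eventually engulfs any prescribed $n$. Continuity of the finitely-fold compositions $\omega_{\sigma_n}$ is what lets the pointwise estimate survive passage to the limit, and the closedness of the condition ``$\le\eta$'' is what keeps the limiting family of estimates admissible for the single application of expansiveness that closes the argument.
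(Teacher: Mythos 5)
Your proof is correct and follows essentially the same route as the paper's: a contradiction argument producing sequences $x_N, y_N$, extraction of convergent subsequences by compactness of $M$, then fixing $n$ and passing to the limit using continuity of the iterates $\omega_{\sigma_n}$ and closedness of the condition ``$\le \eta$'' before invoking expansiveness. Your write-up is in fact slightly more careful than the paper's, since you explicitly justify the order of limits and the engulfing of each fixed $n$ by the window $n \le N$.
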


\begin{proof}[Proof]
	Suppose that exists $\mu$ that fails the lemma. Then for each $N \in \mathbb{N}$ there exists $x_N$ and $y_N$ such that $d(\omega_{\sigma_k}(x_N),\omega_{\sigma_k}(y_N)) \leq \eta$ for all $k \leq N$ but $d(x_N,y_N) \geq \mu$. So, we obtain $\{x_N\}_{N \in \mathbb{N}}$ and $\{y_N\}_{N \in \mathbb{N}}$ and by compactness we can assume they are convergent, respectively to $x$ and $y$. Now fixed $n \in \mathbb{N}$, by continuity of the IFS we have that $\omega_{\sigma_n}(x_N)$ converges to $\omega_{\sigma_n}(x)$ and  $\omega_{\sigma_n}(y_N)$ converges to $\omega_{\sigma_n}(y)$. As $d(\omega_{\sigma_n}(x_N),\omega_{\sigma_n}(y_N)) \leq \eta$ for all $n \leq N$, we obtain that $d(\omega_{\sigma_n}(x),\omega_{\sigma_n}(y)) \leq \eta$. On the other hand, $d(x_N,y_N) \geq \mu$ for all $N \in \mathbb{N}$ wich implies $d(x,y) \geq \mu$. This contradicts the hypothesis of $\omega$ be $\eta$-expansive.
\end{proof}

\begin{proof}[Proof of Theorem 3]
	Let $\varepsilon>0$, $\omega$ be an IFS expansive with concordant shadowing property and $\eta>0$ be the expansivity constant of $\omega$.
	From the proposition~\ref{uniqueness} we obtain that $\omega$ has shadowing uniqueness property, moreover from the proof we know that any $\varepsilon<\frac{\eta}{2}$ satisfies the shadowing uniqueness property, so let us consider $\varepsilon<\frac{\eta}{3}$, from the concordant shadowing property we have $\delta>0$ such that any $\delta$-chain is uniquely $\varepsilon$-shadowable by a chain with the same sequence.
	
	Let $\tilde{\omega}$ be an IFS with $d_H(\omega,\tilde{\omega})<\delta$. Fix $\sigma \in \Omega$ and $x \in X$. Let $\tilde{\sigma}$ be a sequence such that $(\sigma,\tilde{\sigma})$ is $\delta$-compatible.
	
	Since $(\sigma,\tilde{\sigma})$ is $\delta$-compatible, we observe that  $\{\tilde{\omega}_{\tilde{\sigma}_k}(x)\}$ is a $\delta$-chain for $\omega$ with sequence $\sigma$, then there exists a unique point $y_x$ such that the chain $\{\omega_{\sigma_k}(y_x)\}$ $\varepsilon$-shadows $\{\tilde{\omega}_{\tilde{\sigma}_k}(x)\}$.
	
	We define $h:X \rightarrow X$, by $h(x):=y_x$ and we observe that from the shadowing uniqueness property $h$ is well defined and by construction $d(x,h(x))<\varepsilon$ for all $x \in X$. Thus, if $h$ is continuous, then $d_{C^0}(h,id)<\varepsilon$. We also observe that by construction $d(\omega_{\sigma_k}(h(x)),\tilde{\omega}_{\tilde{\sigma}_k}(x))<\varepsilon$ for all $k \in \mathbb{N}$ and $x \in M$. So, if $h$ is continuous, we also have $d_{C^0}(\omega_{\sigma_k} \circ h, \tilde{\omega}_{\tilde{\sigma}_k})<\varepsilon$ for all $k \in \mathbb{N}$.
	
	We claim that $h$ is continuous. Let $\mu>0$ be given. By theorem \ref{lemma_expansiveness} there exists $N \in \mathbb{N}$ such that if $x,y \in X$ and $d(\omega_{\sigma_k}(x),\omega_{\sigma_k}(y)) \leq \eta$, for all $k \leq N$ then $d(x,y)<\mu$. For each $k \in \{0,...,N\}$, $\omega_{\sigma_k}$ and $\tilde{\omega}_{\tilde{\sigma}_k}$ are continuous, as $M$ is compact, they are uniformly continuous and then for each $k$, there exists $\beta_k>0$ and $\tilde{\beta}_k>0$ such that if $d(x,y)< \beta_k$, then $d(\omega_{\sigma_k}(x),\omega_{\sigma_k}(y))< \epsilon$, and if $d(x,y)<\tilde{\beta}_k$, then $d(\tilde{\omega}_{\tilde{\sigma}_k}(x),\tilde{\omega}_{\tilde{\sigma}_k}(y))<\varepsilon$. Take $\beta= \min\{\beta_k,\tilde{\beta}_k: k=0,...,N\}$. We observe that if $d(x,y)<\beta$ then for $k=0,...,N$ we have:
	\begin{align*}
	d(\omega_{\sigma_k}(h(x)), \omega_{\sigma_k}(h(y))) &\leq d(\omega_{\sigma_k}(h(x)), \tilde{\omega}_{\tilde{\sigma}_k}(x))+ d(\tilde{\omega}_{\tilde{\sigma}_k}(x),\tilde{\omega}_{\tilde{\sigma}_k}(y))\\ &+ d(\tilde{\omega}_{\tilde{\sigma}_k}(y),\omega_{\sigma_k}(h(y)))\\
	& < \varepsilon + \varepsilon +\varepsilon < \eta
	\end{align*}
	
	Thus, $d(x,y)<\beta$ implies $d(\omega_{\sigma_k}(h(x)), \omega_{\sigma_k}(h(y)))<\eta$ for $0,...,N$, which implies $d(h(x),h(y))<\mu$ and consequently $h$ continuous and $\omega$ is topologically stable.
\end{proof}

\textit{Remark.} We remark that for this last theorem it is not required the phase space to be a manifold, it works for any compact metric space.

\bibliographystyle{plain}
\bibliography{references}

\end{document}